\title{A Bers-like proof of the existence of train tracks for free
  group automorphisms}
\author{Mladen Bestvina\thanks{The
    author gratefully acknowledges the support by the National
    Science Foundation. }} \date{January 12, 2011}
\newtheorem{thm}{Theorem}
\newtheorem{cor}[thm]{Corollary}
\newtheorem{prop}[thm]{Proposition}
\newtheorem*{main}{Theorem}{}
{}
\theoremstyle{remark}
\newtheorem{definition}[thm]{Definition}
\newtheorem{remark}[thm]{Remark}
\newtheorem*{definition*}{Definition}
\newtheorem*{remark*}{Remark}
\def\R{{\mathbb R}}
\def\X{{\mathcal X}}
\begin{document}

\maketitle

\begin{quote}
\begin{center}
{\it To Mike Davis on the occasion of his 60th birthday}
\end{center}
\end{quote}

\begin{abstract}
Using Lipschitz distance on Outer space we give another proof of the
train track theorem.
\end{abstract}

\section{Introduction}

An elegant proof of Thurston's classification of surface
homeomorphisms \cite{thurston:nielsen} was given by Bers
\cite{Bers}. Given a surface homeomorphism $\Phi$ the proof proceeds
by studying the associated displacement function $\tilde\Phi$ on
Teichm\"uller space $\mathcal T$ with respect to Teichm\"uller metric,
i.e. the function
$$\tilde\Phi(x)=d(x,\Phi(x))$$
There are three possibilities:
\begin{itemize}
\item (elliptic) $\inf\tilde\Phi=0$ and the infimum is realized. In
  this case $\tilde\Phi$ has a fixed point and it is not hard to show
  that $\Phi$ is isotopic to a homeomorphism of finite order.
\item (hyperbolic) $\inf\tilde\Phi>0$ and the infimum is realized. In
  this case Bers proceeds to show that $\Phi$ is isotopic to a
  pseudo-Anosov homeomorphism.
\item (parabolic) The infimum is not realized. In this case Bers shows
  that $\Phi$ is reducible.
\end{itemize}

In this paper we carry out the Bers' argument in the context of
$Out(F_n)$ and Outer space. The role of Teichm\"uller metric is played
by the Lipschitz metric, a direct analog of Thurston's Lipschitz
metric on Teichm\"uller space \cite{thurston:stretch}. This metric is
not symmetric and one must carefully choose the order of the two
points when measuring distance. The result is an alternate proof of
the train track theorem \cite{BH}.

\begin{main}
Every irreducible automorphism $\Phi$ has a
topological representative which is a train track map.
\end{main}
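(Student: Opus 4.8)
The plan is to carry over Bers' trichotomy verbatim, with the (asymmetric) Lipschitz metric $d$ on Outer space in place of the Teichm\"uller metric and the displacement function $\tilde\Phi(x)=d(x,\Phi x)$ in place of Bers'. I shall freely use the basic structure of $d$: that $d(x,y)=\log\Lambda(x,y)$, where $\Lambda(x,y)$ is the smallest Lipschitz constant of a marking--preserving map $x\to y$; White's description $\Lambda(x,y)=\sup_\gamma \ell_y(\gamma)/\ell_x(\gamma)$, the supremum over conjugacy classes being attained on a \emph{candidate} (an embedded loop, a figure eight, or a barbell); the existence of optimal maps $f\colon x\to y$, linear on edges, whose tension subgraph $\Delta_f$ (the edges stretched by exactly $\Lambda(x,y)$) contains a candidate; and the continuity of $\tilde\Phi$ together with its invariance under the centralizer of $\Phi$. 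Set $\mu=\inf\tilde\Phi$; the three Bers cases are that $\mu$ is realized and equal to $0$, that $\mu$ is realized and positive, and that $\mu$ is not realized.

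I would first rule out the last (``parabolic'') case by showing it forces $\Phi$ to be reducible. Given a minimizing sequence $x_n$: if its image in moduli space stays in a compact set then, writing $x_n=\psi_n y_n$ with $y_n$ in a compact set and $\tilde\Phi(x_n)=d(y_n,\psi_n^{-1}\Phi\psi_n\,y_n)$, properness of the $Out(F_n)$--action forces $\{\psi_n^{-1}\Phi\psi_n\}$ to be finite, so a subsequence realizes $\mu$ for a conjugate of $\Phi$, hence for $\Phi$ --- against our assumption. Otherwise the systole of $x_n$ tends to $0$; since volumes are normalized the short subgraph $Y_n\subseteq x_n$ is a proper subgraph and so carries a proper free factor system, and since $\tilde\Phi$ is bounded along $x_n$ the forward estimate $\ell_{x_n}(\Phi^k\gamma)\le e^{k(\mu+1)}\ell_{x_n}(\gamma)$ shows that for each fixed $k$ the automorphism $\Phi^k$ keeps loops that are $x_n$--short $x_n$--short; a finiteness argument (translating the $x_n$ so that the relevant free factor systems fall into a finite list) then extracts, along a subsequence, a proper $\Phi$--invariant free factor system, so $\Phi$ is reducible. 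Thus for irreducible $\Phi$ the infimum is realized; fix a minimizer $x$, and among minimizers one that also minimizes an auxiliary complexity as in \cite{BH}. If $\mu=0$ then $x=\Phi x$, so $\Phi$ lies in the finite stabilizer of $x$ and is realized by a simplicial automorphism of a subdivision of $x$, which is locally injective on every edge of every iterate, hence a train track map, and we are done.

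It remains to treat $\mu=\tilde\Phi(x)=\log\lambda>0$. Take an optimal $f\colon x\to\Phi x$, postcompose with a homeomorphism to get a topological representative $g\colon x\to x$ of $\Phi$, and tighten $g$ to be linear on edges and locally injective on each edge (tightening only shortens the paths $g^k(\gamma)$, so it cannot raise $\tilde\Phi$). Recall that $g$ is a train track map exactly when it takes no \emph{illegal} turn, a turn being illegal when some iterate of $Dg$ identifies its two directions. If $g$ is not a train track map it takes an illegal turn; I would then perform a subdivide--and--fold move of \cite{BH} at it (after arranging, by a preliminary move in Outer space among minimizers, that the tension graph of $f$ is all of $x$, so that the two edges of the turn have equal stretch and the fold is metrically defined). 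The outcome is a marked graph $x'$ carrying a topological representative of $\Phi$ with $\tilde\Phi(x')\le\tilde\Phi(x)$ --- a fold is a homotopy equivalence, and after tightening and renormalizing the maximally stretched edges can only have shrunk, so no candidate gets longer --- but with strictly smaller auxiliary complexity. Since $\tilde\Phi(x')\ge\mu=\tilde\Phi(x)$, $x'$ is again a minimizer, contradicting the choice of $x$. Hence $g$ takes no illegal turn, i.e.\ $g$ is a train track map, which completes the proof.

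The main obstacle is the ``parabolic implies reducible'' step: converting the geometric fact that a minimizing sequence escapes every compact set into the algebraic fact that $\Phi$ fixes a proper free factor system. This demands a precise hold on the thin part of Outer space --- short loops may be entangled in thin subgraphs rather than being individually short --- and is complicated by the asymmetry of $d$, which obstructs naive length comparisons under collapsing short forests and under $\Phi^{-1}$; producing the invariant free factor system and verifying it is proper is where the real work lies. In the hyperbolic case the subtlety to handle carefully is that the fold genuinely does not create a longer candidate and genuinely reduces the chosen complexity, so that the process halts at an honest train track representative rather than cycling.
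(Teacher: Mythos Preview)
Your trichotomy and the elliptic and parabolic cases follow the paper's approach. The paper's Proposition~\ref{thin} is exactly your cocompactness argument, and Proposition~\ref{10} makes your hand-waved ``finiteness argument'' precise via a pigeonhole: one forms a chain $\Gamma_k^{\delta_0}\supset\cdots\supset\Gamma_k^{\delta_{B_n}}$ of thin parts with $\delta_{i+1}=\delta_i/e^{D+1}$; since chains of proper core subgraphs have length at most $B_n$, two consecutive thin parts must share a core, and the optimal map carries $\Gamma_k^{\delta_{i+1}}$ into $\Gamma_k^{\delta_i}$, hence preserves that core up to homotopy. This is cleaner than extracting an invariant free factor system along a subsequence.

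The hyperbolic case is where you diverge from the paper, and there is a genuine gap. The paper's central point (Proposition~\ref{tt}) is that at a minimizer one can, by small perturbations of the metric and of the optimal map $\phi$, arrange that the tension graph $\Delta$ satisfies $\phi(\Delta)\subseteq\Delta$, that edges of $\Delta$ go to legal paths, and that legal turns go to legal turns --- so $\phi|_\Delta$ is already a train track map, and irreducibility forces $\Delta=\Gamma$. The perturbations are organized by the complexity $(\operatorname{rank}H_1(\Delta),-\operatorname{rank}H_0(\Delta))$, and minimality of $\tilde\Phi$ enters precisely to guarantee that $\Delta$ never becomes empty during the process. You instead fold at an illegal turn and appeal to an ``auxiliary complexity as in \cite{BH}''. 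But the \cite{BH} complexity is the Perron--Frobenius stretch factor, and for an optimal map at a minimizer this equals $e^\mu$; since your fold keeps you on the set of minimizers, that quantity is constant and nothing forces termination. Making your argument go through would require importing the secondary bookkeeping of \cite{BH} (valence-two homotopies, subsidiary invariants), at which point you are re-running \cite{BH} rather than giving a Bers-style proof. The paper's contribution is exactly to bypass this by reading the train track structure off the tension graph at a displacement minimizer.
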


\noindent
{\bf Acknowledgements.} The author is indebted to Ric Wade for
reading the paper carefully and making helpful comments.

\section{Outer space and Lipschitz metric}

In this section we review basic definitions and set some notation.

A {\it graph} $\Gamma$ is a finite cell complex of dimension $\leq
1$. A {\it core graph} is a graph $\Gamma$ with all vertices of
valence $\geq 2$. A {\it rose} in rank $n$ is the wedge $R_n$ of $n$
circles. A {\it marking} of a graph $\Gamma$ is a homotopy equivalence
$f:R_n\to\Gamma$. A {\it metric} on $\Gamma$ is an assignment $\ell$
of positive lengths $\ell(e)$ to the edges $e$ of $\Gamma$ such that
the sum is 1. If $\alpha$ is an immersed loop in $\Gamma$ we define
the {\it length} of $\alpha$ with respect to the metric $\ell$ as the
sum $\ell(\alpha)$ of the lengths of edges of $\Gamma$ crossed by
$\alpha$, with multiplicity. If $\alpha$ is not immersed, we define
$\ell(\alpha)$ as the length of the immersed loop homotopic to
$\alpha$ (or 0 if $\alpha$ is nullhomotopic). We may view $\Gamma$ as
a geodesic metric space with each edge $e$ having length $\ell(e)$. A
{\it direction} at $x\in\Gamma$ is a germ of isometric embeddings
$d:[0,\epsilon)\to\Gamma$ with $d(0)=x$. Thus most points of $\Gamma$
  have two directions, and the number of directions at a vertex is the
  valence. Directions can be viewed as analogs of unit tangent
  vectors. If $\phi:\Gamma\to\Gamma'$ is a map which is linear on edges
  and $\phi(x)=x'$,
  then $\phi$ induces a map $\phi_*$ from the set of directions at $x$ to
  the set of directions at $x'$ (unless the slope of $\phi$ is 0 on an
  edge containing $x$).

Recall that Culler-Vogtmann's Outer space $\X_n$ in rank $n$ \cite{CV} is the
set of equivalence classes of triples $(\Gamma,f,\ell)$ where
\begin{itemize}
\item $\Gamma$ is a core graph,
\item $f:R_n\to\Gamma$ is a marking, and
\item $\ell$ is a metric on $\Gamma$.
\end{itemize}

Two such triples $(\Gamma,f,\ell)$ and $(\Gamma',f',\ell')$
are equivalent if there is a homeomorphism $\phi:\Gamma\to\Gamma'$
such that
\begin{itemize}
\item $\phi f\simeq f'$, and
\item for all loops $\alpha$ in $\Gamma$,
  $\ell(\alpha)=\ell'(\phi(\alpha))$. 
\end{itemize}

If $\Gamma$ and $\Gamma'$ have no vertices of valence 2, then $\phi$
must induce a bijection between the edges of $\Gamma$ and of $\Gamma'$
and $\ell(e)=\ell'(\phi(e))$ for every edge $e\subset\Gamma$. Every
triple $(\Gamma,f,\ell)$ is equivalent (for $n\geq 2$) to some
$(\Gamma',f',\ell')$ so that $\Gamma'$ has no vertices of valence 2,
by ``unsubdividing'' and assigning the sum of the lengths of
subdivision edges to the newly created edges.

There are three ways of defining the topology on $\X_n$, all yielding
the same topology.

\begin{itemize}
\item $\X_n$ can be decomposed into open simplices corresponding to
  fixing the marking and varying lengths on $\Gamma$. This gives
  $\X_n$ the structure of a ``complex of simplices with missing
  faces''. Take the weak
  topology with respect to the collection of these simplices with
  missing faces.
\item There is an embedding $\X_n\hookrightarrow \R^{\mathcal S}$
  where $\mathcal S$ is the set of nontrivial conjugacy classes in
  $F_n$, or equivalently, the set of immersed loops in $R_n$. The
  embedding is given by
$$[(\Gamma,f,\ell)]\mapsto (\alpha\mapsto \ell(f(\alpha)))$$
Now take the subspace topology.
\item A neighborhood of $[(\Gamma,f,\ell)]$ is determined by
  $\epsilon>0$ and consists of classes $[(\Gamma',f',\ell')]$ such
  that there is a map $\phi:\Gamma\to\Gamma'$ with $\phi f\simeq f'$
  and such that $\phi$ is $<(1+\epsilon)$-Lipschitz.
\end{itemize}

$Out(F_n)$ acts on $\X_n$ on the right as follows. Let $\Phi\in
Out(F_n)$. We may view $\Phi$ as a homotopy equivalence (defined up to
homotopy) $\Phi:R_n\to R_n$. Then
$$[(\Gamma,f,\ell)]\Phi=[(\Gamma,f\Phi,\ell)]$$

The third definition of the topology on $\X_n$ can be promoted to a
(nonsymmetric) metric.

Let $[(\Gamma,f,\ell)],[(\Gamma',f',\ell')]\in\X_n$. Consider maps
$\phi:\Gamma\to\Gamma'$ so that
\begin{itemize}
\item $\phi f\simeq f'$, and
\item $\phi$ is linear on edges.
\end{itemize}

Call any map $\phi$ that satisfies these two conditions {\it a
  difference of markings}. 
Let $\sigma(\phi)$ denote the maximal slope of $\phi$. 

Observe that if $\alpha$ is any loop in $\Gamma$ then
$$\ell'(\phi(\alpha))\leq\sigma(\phi)\ell(\alpha)$$

The following result is due to Tad White (unpublished). A proof
appears in \cite{FM}, but for completeness we include a proof.

\begin{prop}\label{greengraph}
Let $\phi_0:\Gamma\to\Gamma'$ be a difference of markings. Then
$$\inf\{\sigma(\phi)\mid \phi\simeq\phi_0:\Gamma\to\Gamma'\mbox{ is a
  difference of
  markings}\}=\sup_\alpha\frac{\ell'(\phi_0(\alpha))}{\ell(\alpha)}$$
and moreover both $\inf$ and $\sup$ are realized.
\end{prop}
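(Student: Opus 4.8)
The plan is to denote by $\lambda$ the supremum on the right-hand side and show it is both a lower bound for all slopes and is achieved. The inequality $\sup_\alpha \ell'(\phi(\alpha))/\ell(\alpha) \le \sigma(\phi)$ for any difference of markings $\phi$ is immediate from the displayed observation $\ell'(\phi(\alpha)) \le \sigma(\phi)\ell(\alpha)$; since the left-hand supremum is a homotopy invariant of $\phi_0$ (loops in $\Gamma$ map to freely homotopic loops under homotopic maps, and $\ell'$ depends only on the free homotopy class), this gives $\lambda \le \inf_\phi \sigma(\phi)$. The real content is the reverse: constructing a difference of markings $\phi \simeq \phi_0$ with $\sigma(\phi) \le \lambda$, and showing the supremum $\lambda$ is attained by some loop.

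First I would address the supremum. I'd restrict attention to the candidate loops $\alpha$ that are \emph{legal}, or rather, note that only finitely many ``shapes'' of loops can be relevant: a loop realizing the sup should not backtrack (immersed) and, more importantly, should cross each edge of $\Gamma$ at most a bounded number of times in a single ``turn''. Concretely, I would argue that the supremum is realized by a loop that is an embedded circuit or one that traverses each edge at most twice — the standard argument is that if $\alpha$ is a long loop nearly realizing the sup, one can decompose it into subloops and one of the pieces does at least as well, so the sup over all loops equals the max over a finite collection (loops crossing each edge at most twice, say, giving finitely many combinatorial types once lengths are fixed). This reduces $\lambda$ to a maximum of finitely many ratios $\ell'(\phi_0(\alpha_i))/\ell(\alpha_i)$, hence attained.

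For the infimum, the plan is to set up a linear programming / optimization problem. Fix the combinatorial data of a difference of markings $\phi$ homotopic to $\phi_0$: on each edge $e$ of $\Gamma$, $\phi$ traverses an edge-path $p_e$ in $\Gamma'$ (the homotopy class rel endpoints is constrained, but we may tighten each $p_e$ to be reduced), and being linear means $\phi$ has a well-defined slope $s_e = \ell'(p_e)/\ell(e)$ on $e$. Actually the vertex images and the paths $p_e$ can be varied, but the key constraint is a ``flow'' or ``matching'' condition: I'd reformulate finding the optimal $\phi$ as choosing, for each edge $e'$ of $\Gamma'$, how the ``demand'' $\ell'(e')$ is supplied by pieces of edges of $\Gamma$, subject to each edge $e$ of $\Gamma$ contributing total amount (slope times length) matching its path, with the objective of minimizing $\max_e s_e$. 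This is a finite linear program; LP duality (or a direct compactness argument on the space of differences of markings with bounded slope, which is compact since vertex images range over a finite graph and bounded-length paths are finite in number) shows the infimum is attained, and the dual optimal solution produces precisely a loop $\alpha$ with $\ell'(\phi_0(\alpha))/\ell(\alpha)$ equal to the optimal slope — a ``maximally stretched'' loop witnessing that $\sigma$ cannot be decreased below $\lambda$.

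The main obstacle I anticipate is the matching/duality step: making precise that the optimal $\phi$ has a subgraph $\Delta \subseteq \Gamma$ consisting of edges of maximal slope such that $\phi|_\Delta$ is locally injective (no cancellation at vertices of $\Delta$), so that $\Delta$ carries a loop $\alpha$ with $\ell'(\phi(\alpha)) = \sigma(\phi)\ell(\alpha)$ exactly. If $\phi$ did not have this property, one could push a little of the length off a maximal-slope edge onto a lower-slope neighbor, strictly decreasing $\sigma(\phi)$ — this is the exchange argument that forces optimality to coincide with the existence of such a loop, and hence pins the infimum to $\lambda$. I would also need the routine observation that the space of differences of markings with slope $\le \lambda+1$, up to homotopy, is compact (finitely many combinatorial types, each a compact polytope of edge-length data for $\Gamma'$ — but $\Gamma'$ is fixed, so really finitely many maps once we bound path lengths), to guarantee the infimum is a minimum; this compactness plus lower semicontinuity of $\sigma$ does the job cleanly.
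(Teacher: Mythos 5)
Your overall architecture matches the paper's: monotonicity gives $\sup\le\inf$; Arzela--Ascoli-style compactness gives a minimizer $\phi$; and an ``exchange'' perturbation of $\phi$ near vertices of the tension subgraph $\Delta$ then closes the gap. But the key structural claim as you state it is wrong. You assert that the minimizing $\phi$ can be arranged so that $\phi|_\Delta$ is \emph{locally injective} on $\Delta$ (``no cancellation at vertices of $\Delta$''). This is false in general. Already in the paper's Figure~2 example ($\Phi(a)=ab$, $\Phi(b)=bab$) the tension graph is the whole rose, and $\phi$ identifies the two terminal directions $\overline a,\overline b$ at the vertex, so $\phi$ is not locally injective there; and there is no proper core subgraph of $\Delta$ to retreat to. What the exchange argument actually delivers is the strictly weaker, and correct, statement: at each vertex of $\Delta$ the directions split into \emph{gates} (classes of directions with the same $\phi_*$-image), and every vertex of $\Delta$ has at least two gates. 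Folds \emph{within} a gate are allowed. This weaker condition still suffices, because a graph all of whose vertices have at least two gates carries a ``legal'' loop $\alpha$ (one crossing only turns between distinct gates), and on such an $\alpha$ the image $\phi(\alpha)$ is immersed, whence $\ell'(\phi(\alpha))=\sigma(\phi)\,\ell(\alpha)$ exactly. That is the equality you need, and it comes from the legal-loop property, not from local injectivity of $\phi$ on $\Delta$.

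Two lesser points. Your separate argument that the $\sup$ is realized (split a long immersed loop at a repeated oriented edge, apply the mediant inequality, reduce to loops crossing each edge at most twice) is correct, but it is redundant once the legal loop above is in hand; the paper obtains the realizing loop as a byproduct of the exchange argument and records the finite-collection observation only as a remark. The LP-duality route you gesture at is plausible in outline but you do not carry it out, and it has genuine subtleties: the combinatorial type of $\phi$ (which reduced edge-paths are used) varies, so the feasible region is a union of polytopes rather than a single LP, and the dual optimum is a priori a measured current on $\Gamma$ rather than a single loop. The Arzela--Ascoli compactness route you also mention is cleaner and is what the paper uses.
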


\begin{proof}
That $\inf\geq\sup$ follows from the observation just before the
proposition. Arzela-Ascoli implies that $\inf$ is realized. Let
$\phi:\Gamma\to\Gamma'$ be a difference of markings that realizes
$\inf$. By $\Delta=\Delta(\phi)$ denote the union of all edges of
$\Gamma$ on which $\phi$ has slope equal to $\sigma(\phi)$. We may
also assume that $\Delta$ is minimal possible. Therefore $\Delta$ is a
core graph (any edges with a valence 1 vertex can be removed by a
small homotopy of $\phi$, by moving the image of the valence 1 vertex
in the direction that lowers the slope on the edge of $\Delta$
containing it).

Let $v$ be a vertex of $\Delta$ and consider a {\it turn}, i.e. pair
of distinct directions $\{d,d'\}$ out of $v$ in $\Delta$. We say that
this turn is {\it legal} if $\phi_*(d)\neq \phi_*(d')$, and it is {\it
  illegal} if $\phi_*(d)=\phi_*(d')$. A path or a loop in $\Delta$ is
{\it legal} if it crosses only legal turns.  Note that the loop
$\phi(\alpha)$ is immersed if and only if $\alpha$ is legal. Also
observe that there is an equivalence relation on the set of directions
(within $\Delta$) out of each vertex $v\in\Delta$ where $d\sim d'$ if
and only if $\phi_*(d)=\phi_*(d')$. A turn $\{d,d'\}$ is legal if and
only if $d\not\sim d'$. We will call equivalence classes {\it gates}.

If $\Delta$ contains a vertex with only one gate, then a small
homotopy as above would reduce $\Delta$. Therefore there are at least
two gates at each vertex. It follows that every legal edge path in
$\Delta$ can be extended in a legal fashion. In particular, $\Delta$
admits a legal loop $\alpha$. By construction
$\ell'(\phi(\alpha))=\sigma(\phi)\ell(\alpha)$, and thus $\alpha$
realizes the $\sup$ and equality holds as claimed.
\end{proof}

\begin{remark}
The legal loop $\alpha$ can be chosen to cross each edge of $\Delta$
at most twice. Therefore, the $\sup$ can be calculated by taking the
maximum over a finite collection of loops in $\Gamma$. Furthermore,
$\ell'(\phi_0(\alpha))$ does not depend on $\phi_0$, only on the homotopy
class of $\phi_0$, so the quantity in the statement can be easily
calculated. 
\end{remark}

To simplify notation, we will replace $[(\Gamma,f,\ell)]$ with
$\Gamma$. Denote by
$$\sigma(\Gamma,\Gamma')$$
the quantity in the statement of Proposition \ref{greengraph}.

\begin{definition}
Let $\Gamma,\Gamma'\in\X_n$. A difference of markings
$\phi:\Gamma\to\Gamma'$ is {\it optimal} if
$\sigma(\phi)=\sigma(\Gamma,\Gamma')$.  The {\it tension (sub)graph}
$\Delta=\Delta_\phi\subset\Gamma$ with respect to an optimal map
$\phi$ is the union of edges on which the slope of $\phi$ equals
$\sigma(\phi)$.

The tension graph is equipped with a {\it train
track structure} as in the proof of Proposition \ref{greengraph}: a
turn $\{d,d'\}$ is legal if $\phi_*(d)\neq \phi_*(d')$ and otherwise
it is illegal. Directions (in $\Delta$) at a vertex break up into
equivalence classes, called {\it gates}, so that $\{d,d'\}$ is illegal if
and only if $d,d'$ are in the same gate.
\end{definition}

\begin{definition}
$$d(\Gamma,\Gamma')=\log\sigma(\Gamma,\Gamma')$$
\end{definition}

\begin{prop}
\begin{itemize}
\item $d(\Gamma,\Gamma')\geq 0$ with equality only when
  $\Gamma=\Gamma'$. 
\item $d(\Gamma,\Gamma'')\leq d(\Gamma,\Gamma')+d(\Gamma',\Gamma'')$.
\item $Out(F_n)$ acts on $\X_n$ by isometries: $d(\Gamma\cdot \Phi,\Gamma'
\cdot  \Phi)=d(\Gamma,\Gamma')$. 
\end{itemize}
\end{prop}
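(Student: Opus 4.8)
The three statements are largely formal consequences of Proposition~\ref{greengraph} and the definition $d(\Gamma,\Gamma')=\log\sigma(\Gamma,\Gamma')$, so the plan is to dispatch each in turn, with the first (positivity and non-degeneracy) being the one requiring genuine input.

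For the triangle inequality: given differences of markings $\phi:\Gamma\to\Gamma'$ and $\psi:\Gamma'\to\Gamma''$, the composition $\psi\phi:\Gamma\to\Gamma''$ satisfies $\psi\phi f\simeq\psi f'\simeq f''$, so it is a difference of markings (after making it linear on edges, or more simply by using the $\sup$ formulation). For any loop $\alpha$ in $\Gamma$ one has $\ell''(\psi\phi(\alpha))\le\sigma(\psi)\,\ell'(\phi(\alpha))\le\sigma(\psi)\sigma(\phi)\,\ell(\alpha)$ by applying the observation preceding Proposition~\ref{greengraph} twice. Taking the sup over $\alpha$ and using that $\sigma(\Gamma,\Gamma'')\le\sigma(\psi\phi)$ gives $\sigma(\Gamma,\Gamma'')\le\sigma(\Gamma,\Gamma')\sigma(\Gamma',\Gamma'')$; taking logs yields the claim.

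For $Out(F_n)$-invariance: if $\phi:\Gamma\to\Gamma'$ is a difference of markings between the triples $(\Gamma,f,\ell)$ and $(\Gamma',f',\ell')$, then the very same map $\phi$, now regarded as a map between $(\Gamma,f\Phi,\ell)$ and $(\Gamma',f'\Phi,\ell')$, is a difference of markings, since $\phi f\simeq f'$ implies $\phi(f\Phi)\simeq f'\Phi$; and $\sigma(\phi)$ is unchanged because the metrics $\ell,\ell'$ are unchanged. This sets up a bijection between differences of markings $\Gamma\to\Gamma'$ and differences of markings $\Gamma\cdot\Phi\to\Gamma'\cdot\Phi$ preserving maximal slope, so the infima agree and $d(\Gamma\cdot\Phi,\Gamma'\cdot\Phi)=d(\Gamma,\Gamma')$.

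The first item is the main obstacle and deserves the most care. That $d\ge0$, i.e. $\sigma(\Gamma,\Gamma')\ge1$, follows because a difference of markings is a homotopy equivalence, hence $\pi_1$-surjective, so there is a loop $\alpha$ in $\Gamma$ with $\phi(\alpha)$ freely homotopic to any prescribed loop in $\Gamma'$; choosing $\alpha$ of minimal length in its homotopy class and $\phi(\alpha)$ not nullhomotopic, total-length-one normalization forces $\sup_\alpha \ell'(\phi(\alpha))/\ell(\alpha)\ge1$ (one cannot strictly shrink all loops simultaneously since total edge length is $1$ on both sides — more precisely, if every loop were shortened one could take an optimal $\phi$ whose tension graph, by Proposition~\ref{greengraph}, carries a legal loop stretched by $\sigma<1$, and then iterate to contradict that lengths of a fixed conjugacy class are bounded below by the injectivity radius). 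For the equality case: if $d(\Gamma,\Gamma')=0$ then $\sigma(\Gamma,\Gamma')=1$, so by Proposition~\ref{greengraph} an optimal $\phi$ has $\ell'(\phi(\alpha))\le\ell(\alpha)$ for all loops $\alpha$, with equality on the tension graph $\Delta$; symmetrically one argues, or one shows directly, that $\phi$ must be a local isometry, hence (being a homotopy equivalence of core graphs) a homeomorphism realizing the equivalence of the two triples, so $\Gamma=\Gamma'$ in $\X_n$. I would double-check whether the clean symmetric argument needs $\sigma(\Gamma',\Gamma)=1$ as well, or whether injectivity radius bounds alone suffice; this interplay between the two asymmetric directions is where the argument is most delicate.
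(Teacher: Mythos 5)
Your handling of the triangle inequality (compose optimal maps, then take logs) and of $Out(F_n)$-invariance (precompose markings with $\Phi$; slopes are untouched) is correct and matches the paper's argument exactly, so those two items are fine.

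The first item is where the paper's argument is short and yours is both longer and not actually correct. The paper observes that an optimal map $\phi:\Gamma\to\Gamma'$ is surjective (it is a homotopy equivalence between core graphs), and the total length of the image of $\phi$ is at most $\sigma(\phi)\cdot\operatorname{vol}(\Gamma)=\sigma(\phi)$. If $\sigma(\phi)<1$ this is $<1=\operatorname{vol}(\Gamma')$, contradicting surjectivity; if $\sigma(\phi)=1$, equality forces $\phi$ to be a local isometry, hence an isometry, hence an identification of the two points of $\X_n$. Your parenthetical ``one cannot strictly shrink all loops simultaneously'' is the right intuition, but your attempt to make it precise goes wrong: you propose to ``iterate'' $\phi$ to contradict a lower bound on the lengths of a fixed conjugacy class, but $\phi$ is a map from $\Gamma$ to $\Gamma'$, two \emph{different} metric graphs, so $\phi$ cannot be iterated at all. (The iteration argument only makes sense when $\Gamma=\Gamma'$, which is what you are trying to prove.) You also handle the equality case only by waving at ``one shows directly that $\phi$ must be a local isometry'' without supplying the step; the surjectivity-plus-volume argument above is exactly what makes this direct. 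Your own closing caveat (``this interplay between the two asymmetric directions is where the argument is most delicate'') is well placed --- the clean resolution is that no symmetric information is needed, only the surjectivity and the normalization $\operatorname{vol}=1$ on both sides.
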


\begin{proof}
If $\sigma(\Gamma,\Gamma')<1$ then the volume of the image (i.e. the
sum of the lengths of images of edges) of an optimal map
$\phi:\Gamma\to\Gamma'$ is $<1$, contradicting the fact that $\phi$ is
surjective. Likewise, if $\sigma(\Gamma,\Gamma')=1$ then $\phi$ must
be an isometry.

The second statement follows by composing optimal maps and homotoping
rel vertices to a map linear on edges. The third statement is obvious.
\end{proof}

But note that in general $d(\Gamma,\Gamma')\neq
d(\Gamma',\Gamma)$. See \cite{AB}. It can also be shown that
$d:\X_n\times\X_n\to [0,\infty)$ is continuous.

\section{Trichotomy}\label{trichotomy}

Fix an automorphism $\Phi\in Out(F_n)$ and consider the {\it
  displacement function} 
$$\tilde\Phi:\X_n\to [0,\infty)$$
given by
$$\Gamma\mapsto d(\Gamma,\Gamma\cdot\Phi)$$
There are three possibilities, as follows:
\begin{itemize}
\item (elliptic) $\inf\tilde\Phi=0$ and it is realized.
\item (hyperbolic) $\inf\tilde\Phi>0$ and it is realized.
\item (parabolic) $\inf\tilde\Phi$ is not realized.
\end{itemize}
We consider these cases separately.

\subsection{$\Phi$ is elliptic}

An example is pictured below.

\begin{figure}[ht]
\centerline{\input{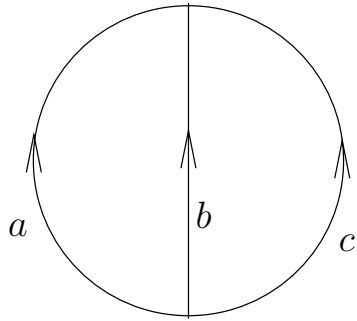}}
\caption{\label{elliptic} $\Phi(a)=\overline b$, $\Phi(b)=\overline
  c$, $\Phi(c)=\overline a$. All 3 edges have length $1/3$, the
  tension graph is all of $\Gamma$ and all nondegenerate turns are
  legal. $\Phi$ has order 6. Bar denotes the edge with opposite orientation.}
\end{figure}

Here we are assuming that $\Phi$ has a fixed point,
i.e. $\Gamma\cdot\Phi=\Gamma$ for some $\Gamma\in\X_n$. If $f$ is the
marking of $\Gamma$, we have $f\Phi\simeq \phi f$ for an isometry
$\phi:\Gamma\to\Gamma$. Since (for $n\geq 2$) isometries of $\Gamma$
have finite order, it follows that for some $k>0$ we have $\phi^k=id$
and hence $f=\phi^k f\simeq f\Phi^k$, so $\Phi^k$ is (homotopic to)
the identity, i.e. $\Phi$ has finite order.

\subsection{$\Phi$ is hyperbolic}

Suppose that $\inf \tilde\Phi$ is realized on $\Gamma\in\X_n$. Let
$\log\lambda=d(\Gamma,\Gamma\cdot\Phi)=\inf\tilde\Phi>0$, so
$\lambda>1$. Let $\Delta\subseteq \Gamma$ be the tension graph with its
train track structure, with respect to an optimal map $\phi$.

An example of a hyperbolic automorphism is given in Figure
\ref{hyperbolic}. 

\begin{figure}[H]
\centerline{\input{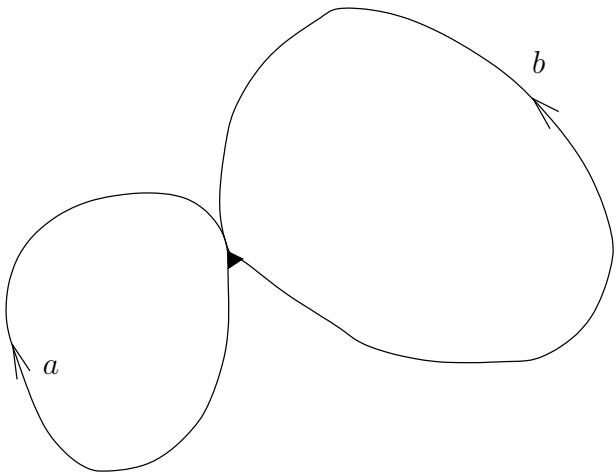}}
\caption{\label{hyperbolic} $\Phi(a)=ab$, $\Phi(b)=bab$.
Lengths of edges and $\sigma(\phi)=\lambda$ are computed from the
equations $\ell(a)+\ell(b)=1$, $\lambda \ell(a)=\ell(a)+\ell(b)$ and $\lambda
\ell(b)=\ell(a)+2\ell(b)$, i.e. $\lambda=\frac{3+\sqrt{5}}2$,
$\ell(a)=\frac{3-\sqrt{5}}2$, $\ell(b)=\frac{\sqrt{5}-1}2$. The tension graph is
all of $\Gamma$. The gates are indicated by the little triangle at the
vertex: they are $\{\overline a,\overline b\}, \{a\}, \{b\}$, where we
adopt the convention that $a$ represents the initial direction of the
edge $a$, while $\overline a$ represents the terminal direction of
$a$, and similarly for $b$.}
\end{figure}

\begin{prop}\label{tt}
After an arbitrarily small perturbation of $\Gamma$ that preserves the
condition that $d(\Gamma,\Gamma\cdot\Phi)=\log\lambda$ there is
an optimal map $\phi:\Gamma\to\Gamma\cdot\Phi$ such that
\begin{itemize}
\item
  $\phi(\Delta)\subseteq\Delta$,
\item $\phi$ sends edges of
$\Delta$ to
legal paths, and 
\item $\phi_*$ sends legal turns to
legal turns.
\end{itemize}
\end{prop}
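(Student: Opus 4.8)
The plan is to start with an arbitrary optimal map $\phi:\Gamma\to\Gamma\cdot\Phi$ with tension graph $\Delta$ and successively fix the three defects, using the freedom to perturb $\Gamma$ within the stratum of Outer space where $\tilde\Phi$ attains its minimum. The first step is to arrange $\phi(\Delta)\subseteq\Delta$. Since $\log\lambda=\inf\tilde\Phi$, the metric $\ell$ on $\Gamma$ is extremal: one cannot shrink $\Delta$ and grow its complement without increasing $\sigma$. Concretely, consider the edges of $\Gamma$ not in $\Delta$, on which $\phi$ has slope $<\lambda$; if some edge of $\Delta$ had its $\phi$-image leaving $\Delta$, one could attempt to redistribute length from the non-tension edges toward $\Delta$, lowering the maximal slope — contradicting minimality of $\tilde\Phi(\Gamma)$ unless $\phi(\Delta)$ already lies in $\Delta$. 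I would phrase this via a linear-programming / Perron--Frobenius argument on the transition matrix of $\phi$ restricted to $\Delta$: optimality forces the restriction to be (a power of) an irreducible piece whose Perron eigenvalue is exactly $\lambda$, and the eigenvector gives the edge lengths, so $\phi$ maps $\Delta$ into $\Delta$. This is where the hypothesis ``$\inf$ is realized'' is doing real work, and it is the main obstacle: making the ``redistribute length'' move precise and checking that after the perturbation we still have $d(\Gamma,\Gamma\cdot\Phi)=\log\lambda$ and that $\Delta$ has not changed in a bad way.

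The second step is to make $\phi$ send edges of $\Delta$ to \emph{legal} paths, i.e. to tighten. By Proposition \ref{greengraph} the image $\phi(\alpha)$ of a loop is immersed iff $\alpha$ is legal, so the obstruction to $\phi|_\Delta$ sending edges to legal paths is that some edge path $\phi(e)$ backtracks, i.e. crosses an illegal turn of $\Gamma\cdot\Phi$. I would homotope $\phi$ rel vertices to eliminate backtracking in each $\phi(e)$, folding where necessary; since $\phi(\Delta)\subseteq\Delta$ and folding only decreases or preserves edge-lengths of images, this does not raise $\sigma(\phi)$, and by minimality it cannot lower it either, so $\phi$ stays optimal and $\Delta$ stays the tension graph. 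Care is needed at vertices — tightening $\phi(e)$ and $\phi(e')$ sharing a vertex must be done consistently — but after subdividing $\Gamma$ slightly (a legitimate perturbation, reversing the ``unsubdivide'' operation) one can assume vertices map to vertices and tighten edge by edge.

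The third step, promoting ``$\phi_*$ sends legal turns to legal turns,'' is the analogue of the classical train-track argument in \cite{BH} and should follow from iterating or from a direct combinatorial argument: if $\{d,d'\}$ is a legal turn in $\Delta$ but $\phi_*(d)\sim\phi_*(d')$ in $\Gamma\cdot\Phi$, then some legal loop would have non-immersed $\phi$-image, contradicting what was arranged in step two — more precisely, a legal turn that becomes illegal under $\phi_*$ would let us build a legal loop $\alpha$ in $\Delta$ whose $\phi$-image is not legal, but then $\ell'(\phi(\alpha))<\sigma(\phi)\ell(\alpha)$ after tightening, so $\alpha$ does not realize the $\sup$; running this for all legal loops and invoking the ``at least two gates at each vertex'' fact from the proof of Proposition \ref{greengraph} forces legal-to-legal. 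I would close by checking that all the moves above are arbitrarily small perturbations of $\Gamma$ (rescaling edge lengths within a simplex, plus harmless subdivision) and preserve $d(\Gamma,\Gamma\cdot\Phi)=\log\lambda$, which is immediate since each move was shown not to change $\sigma$.
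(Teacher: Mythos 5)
Your high-level plan (fix the three defects one at a time, using perturbations of $\Gamma$ and the minimality of $\tilde\Phi$) matches the paper's, and your instinct for Step~1 --- scale up $\Delta$ and scale down its complement --- is the right move. But the execution diverges from the paper's proof in ways that leave real gaps, especially in Steps~2 and~3.

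In Step 1, the ``contradiction'' you aim for does not materialize: after redistributing length, edges $e\subset\Delta$ with $\phi(e)\subset\Delta$ keep slope exactly $\lambda$, so the maximal slope does \emph{not} drop and there is no contradiction with minimality. Also, the Perron--Frobenius statement is not meaningful as posed, since when $\phi(\Delta)\not\subseteq\Delta$ there is no transition matrix of ``$\phi$ restricted to $\Delta$.'' The paper instead runs the scaling move \emph{iteratively}: by minimality some edge of $\Delta$ must retain slope $\lambda$, so the new tension graph is a nonempty proper subgraph of $\Delta$; a complexity bound (the pair $(\operatorname{rank} H_1(\Delta),-\operatorname{rank} H_0(\Delta))$, decreased also when 1-gate vertices are removed as in Proposition~\ref{greengraph}) forces termination with $\phi(\Delta)\subseteq\Delta$.

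Step 2 contains a genuine error: you conflate ``$\phi(e)$ backtracks'' (not immersed) with ``$\phi(e)$ crosses an illegal turn'' (immersed but not legal). These are different. Since $\phi$ is already optimal and linear on edges, $\phi(e)$ is immersed, so there is nothing to tighten; homotoping $\phi$ rel vertices will not remove an illegal turn from $\phi(e)$. The move that works is to perturb the \emph{graph} $\Gamma$ by folding the illegal turn --- identifying short initial segments of the two edges in the turn --- which $\phi$ descends through, and an edge drops out of $\Delta$, again decreasing complexity.

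Step 3 has the same flaw in sharper form: if $\alpha$ is a legal loop crossing the offending legal turn, then $\phi(\alpha)$ is \emph{immersed} (by Proposition~\ref{greengraph}, immersion of $\phi(\alpha)$ is exactly legality of $\alpha$), so $\ell'(\phi(\alpha))=\sigma(\phi)\ell(\alpha)$ and $\alpha$ \emph{does} realize the $\sup$. There is no ``after tightening.'' The correct move is again a graph perturbation: fold the illegal image turn $\{\phi_*(d),\phi_*(d')\}$, which turns $\{d,d'\}$ into an illegal turn. This either decreases $\sum_v\max(0,G(v)-2)$ or creates a 1-gate vertex, which then shrinks $\Delta$; a finite termination argument finishes the proof. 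In short, the paper's key tool in all three steps is perturbation of $\Gamma$ by scaling or folding combined with a decreasing-complexity argument; the tightening/Perron--Frobenius route you sketch does not close the gaps.
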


A 2-gate vertex $v$ is allowed to be mapped to a non-vertex, with the
directions in each gate mapping to one of two directions out of
$\phi(v)$, i.e. the two directions out of a non-vertex are regarded as
forming a legal turn. 

\begin{proof}
Fix an optimal map $\phi:\Gamma\to\Gamma\cdot\Phi$. The {\it
  complexity} of $\Delta$ is the pair
$(rank(H_1(\Delta)),-rank(H_0(\Delta))$. In the moves that follow the
complexity is never increased, and is often decreased. Note that if
$\Delta$ is a core graph and $\Delta'\subset\Delta$ is a proper core
subgraph, then the complexity of $\Delta'$ is strictly smaller than
that of $\Delta$.

Suppose that all vertices of $\Delta$ have $\geq 2$ gates (in
particular, $\Delta$ is a core graph), that 
$\phi(\Delta)\not\subseteq\Delta$ and let $e$ be an edge of $\Delta$
with $\phi(e)\not\subset\Delta$. Perturb the metric on $\Gamma$ by
scaling by $\mu>1$ on the edges of $\Delta$ and scaling down on the
edges in the complement of $\Delta$, maintaining volume 1. Denote the
new metric graph $\Gamma'$. The tension graph $\Delta'$ for the new
pair $\Gamma'\to\Gamma'\cdot\Phi$ (using the same map made linear on
edges) is contained in $\Delta$ and does
not contain $e$. Note that the slope on some edge must be
$\lambda$ by the minimality assumption. Continuing in this way we
obtain a perturbation of $\Gamma$ where
$\phi(\Delta)\subseteq\Delta$. If during the process we encounter
$\Delta$ with a 1-gate vertex, we perturb the map as in the proof of
Proposition \ref{greengraph} with the effect that $\Delta$ is replaced
by a smaller graph.

If $\phi$ maps an edge $e$ of $\Delta$ over an illegal turn, first perturb
by folding the illegal turn (this means identify initial
segments of small length $\epsilon>0$ in the two edges and rescale the
metric so the volume is 1; the map $\phi$ naturally induces a map on
the quotient). After homotoping rel vertices to a map linear on edges,
we see that an edge (induced by $e$) drops out of $\Delta$ and
complexity decreases. Again it may be necessary to remove 1-gate
vertices. 

Now suppose that $\phi_*$ maps a legal turn to an illegal
turn. Perturb by folding the illegal turn. This converts the legal
turn to an illegal turn and it either lowers the number
$$\sum_v [\max(0,G(v)-2)]$$
where $G(v)$ is the number of gates at $v$, or else it introduces a
1-gate vertex. In the latter case the subsequent perturbation of
$\phi$ lowers the complexity of $\Delta$.
At the end of the process we have $\phi$ and $\Delta$
satisfying the conclusion. 
\end{proof}

\begin{cor}
Let $\Gamma$ realize $\inf\tilde\Phi=\log\lambda$. Then
$d(\Gamma,\Gamma\cdot \Phi^k)=k\log\lambda$ for any $k=1,2,3,\cdots$. 
\end{cor}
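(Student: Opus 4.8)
The plan is to prove the two inequalities separately. The inequality $d(\Gamma,\Gamma\cdot\Phi^k)\le k\log\lambda$ is immediate from the triangle inequality and the fact that $Out(F_n)$ acts by isometries:
$$d(\Gamma,\Gamma\cdot\Phi^k)\le\sum_{i=0}^{k-1}d(\Gamma\cdot\Phi^i,\Gamma\cdot\Phi^{i+1})=k\,d(\Gamma,\Gamma\cdot\Phi)=k\log\lambda.$$
So the content is the reverse inequality, and the idea is to exhibit a single loop in $\Gamma$ that is stretched by exactly $\lambda^k$ under a difference of markings $\Gamma\to\Gamma\cdot\Phi^k$, and then quote Proposition \ref{greengraph}.

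First I would invoke Proposition \ref{tt}: after an arbitrarily small perturbation of $\Gamma$ (still realizing $\inf\tilde\Phi$) there is an optimal map $\phi\colon\Gamma\to\Gamma\cdot\Phi$ with $\phi(\Delta)\subseteq\Delta$, sending edges of $\Delta$ to legal paths and legal turns to legal turns. Since the underlying metric graph of $\Gamma\cdot\Phi$ coincides with that of $\Gamma$, I may regard $\phi$ as a self-map $\Gamma\to\Gamma$ that is linear on edges, has $\sigma(\phi)=\lambda$, and satisfies $\phi f\simeq f\Phi$ for the marking $f$. Iterating this homotopy gives $\phi^k f\simeq f\Phi^k$, so (after homotoping rel vertices to a map linear on edges, if desired) $\phi^k$ is a difference of markings $\Gamma\to\Gamma\cdot\Phi^k$.

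The key point is that $\phi$ carries legal loops contained in $\Delta$ to legal loops contained in $\Delta$, multiplying their length by $\lambda$. Containment in $\Delta$ is clear from $\phi(\Delta)\subseteq\Delta$. For legality, write a legal loop as a cyclic edge-path $e_1e_2\cdots e_m$; the turns interior to each $\phi(e_i)$ are legal because $\phi(e_i)$ is a legal path, while at the junction of $\phi(e_i)$ and $\phi(e_{i+1})$ the two incoming directions are exactly the $\phi_*$-images of the two directions of the legal turn $\{\overline{e_i},e_{i+1}\}$, hence form a legal turn since $\phi_*$ preserves legality. So $\phi(e_1\cdots e_m)$ crosses only legal turns, i.e. is again a legal loop. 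As $\phi$ has slope exactly $\lambda$ on every edge of $\Delta$, the parametrized loop $\phi\circ\alpha$ has traversed length $\lambda\,\ell(\alpha)$, and since it is immersed this equals $\ell(\phi(\alpha))$. Now choose any legal loop $\alpha$ in $\Delta$ — one exists because every vertex of $\Delta$ has at least two gates, so legal paths extend. Applying the statement $k$ times gives $\ell(\phi^k(\alpha))=\lambda^k\ell(\alpha)$, so by Proposition \ref{greengraph} applied to $\phi^k$ we get $\sigma(\Gamma,\Gamma\cdot\Phi^k)\ge\ell(\phi^k(\alpha))/\ell(\alpha)=\lambda^k$, hence $d(\Gamma,\Gamma\cdot\Phi^k)\ge k\log\lambda$. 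Together with the upper bound this gives equality for the perturbed $\Gamma$; to recover the statement for the original minimizer I would apply the above to perturbations $\Gamma_n\to\Gamma$ (each realizing $\inf\tilde\Phi$) and use continuity of $d$ and of the $Out(F_n)$-action.

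The step I expect to be the main obstacle is the bookkeeping hidden in the ``legal loops go to legal loops'' claim: tracking directions and turns under $\phi_*$ at the junctions of an edge-path through several iterations, and in particular making the convention from Proposition \ref{tt} (a $2$-gate vertex of $\Delta$ may be sent by $\phi$ to a non-vertex, whose two directions are declared to form a legal turn) consistent at every intermediate stage and harmless for the length computation $\ell(\phi^k(\alpha))=\lambda^k\ell(\alpha)$.
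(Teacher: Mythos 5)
Your argument is correct and is essentially the paper's own proof, just with the details filled in: the paper also perturbs via Proposition \ref{tt}, observes that a legal loop $\alpha$ has $\phi(\alpha)$ legal with $\ell(\phi^k(\alpha))=\lambda^k\ell(\alpha)$ by iteration, and then invokes continuity to remove the perturbation. The upper bound via the triangle inequality and the appeal to Proposition \ref{greengraph} for the lower bound are left implicit in the paper but are exactly what is meant.
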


\begin{proof}
After perturbing as in the proof of Proposition \ref{tt}, the
statement follows by observing that for a legal loop $\alpha$ the loop
$\phi(\alpha)$ is also legal, and iterating we find that the length of
$\phi^k(\alpha)$ is equal to $\lambda^k\ell(\alpha)$. By continuity,
this is true before perturbing as well.
\end{proof}

\begin{remark}
It is easy to see that if $\Phi$ is any automorphism and
$\phi:\Gamma\to\Gamma\cdot \Phi$ an optimal map satisfying the
conclusions of Proposition \ref{tt},
then $\tilde\Phi$ achieves minimum at $\Gamma$. Indeed, as above we
have $d(\Gamma,\Gamma\cdot\Phi^k)=k d(\Gamma,\Gamma\cdot\Phi)$ for
$k>0$ and if
$\Gamma'\in \X_n$ then 
\begin{eqnarray*}
d(\Gamma,\Gamma\cdot\Phi^k)\leq
d(\Gamma,\Gamma')+d(\Gamma',\Gamma'\cdot\Phi^k)+
d(\Gamma'\cdot\Phi^k,\Gamma\cdot\Phi^k)\leq \\
d(\Gamma,\Gamma')+k
d(\Gamma',\Gamma'\cdot\Phi)+d(\Gamma',\Gamma)
\end{eqnarray*}
The first and last
terms are independent of $k$ so by dividing by $k$ and taking the
limit as $k\to\infty$ we see that $d(\Gamma,\Gamma\cdot\Phi)\leq
d(\Gamma',\Gamma'\cdot\Phi)$.
\end{remark}

For another example of an automorphism of this type consider
$\Phi:F_3\to F_3$ given by $\Phi(a)=ab$, $\Phi(b)=bab$ and
$\Phi(c)=cw$ where $w$ is any word in $a$ and $b$. Let $\Gamma$ be the
rose with the metric on $a,b$ as in Figure \ref{hyperbolic} but scaled
by $t>0$, and let the length of $c$ be $1-t$. When $t>0$ is
sufficiently small (depending on $w$) $\Gamma$ realizes the minimal
displacement $\log\lambda$ with the same $\lambda$ as in Figure
\ref{hyperbolic}.

\subsection{$\Phi$ is parabolic}

One example of a parabolic automorphism, with $\inf\tilde\Phi=0$, is
shown in Figure \ref{parabolic1}.

\begin{figure}[ht]
\centerline{\input{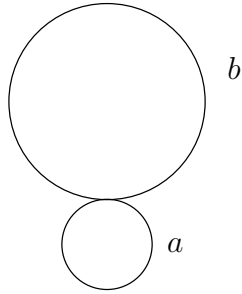}}
\caption{\label{parabolic1} $\Phi(a)=a$, $\Phi(b)=ab$.
Let the length of $a$ be $t>0$ and the length of $b$ is $1-t$. Then
the slope on $a$ is 1 and the slope on $b$ is $\frac 1{1-t}$. Thus as
$t\to 0$ the displacement converges to 0. The tension graph is the
loop $b$.}
\end{figure}

For another example, consider $\Phi(a)=ab$, $\Phi(b)=bab$,
$\Phi(c)=cad$, $\Phi(d)=dcad$. On $a$ and $b$ take the metric from
Figure \ref{hyperbolic} scaled by a parameter $t>0$, and on $c$ and
$d$ take the same metric (with $c$ corresponding to $a$ and $d$ to
$b$) scaled by $1-t$. The tension graph consists of $c$ and $d$, with
slope converging to $\lambda=\frac{3+\sqrt{5}}2$ as $t\to 0$. In this
example $\inf\tilde\Phi=\log\lambda>0$ is not realized. (One way to
see this is to show that the (combinatorial) length of $\Phi^k(c)$
grows like $k\lambda^k$.)

Fix a sequence $\Gamma_k$ such that $$d(\Gamma_k,\Gamma_k\cdot\Phi)\to
D=\inf\tilde\Phi$$ but $D$ is not realized.

For $\theta>0$ denote by $\X_n(\theta)$ the subspace of $\X_n$
consisting of marked metric graphs $\Gamma$ where every nontrivial
loop has length $\geq\theta$ (the {\it $\theta$-thick part of
  $\X_n$}). Then $\X_n(\theta)$ is an $Out(F_n)$-invariant closed subset
  on which $Out(F_n)$ acts cocompactly.

\begin{prop}\label{thin}
For any $\theta>0$ there are only finitely many $\Gamma_k$ with
$\Gamma_k\in\X_n(\theta)$. 
\end{prop}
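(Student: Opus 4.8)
The plan is to argue by contradiction: suppose infinitely many $\Gamma_k$ lie in $\X_n(\theta)$. Since $Out(F_n)$ acts cocompactly on $\X_n(\theta)$, we can translate each such $\Gamma_k$ back into a fixed compact set $K\subseteq\X_n(\theta)$ by an element $\Psi_k\in Out(F_n)$; because $\tilde\Phi$ is not $Out(F_n)$-invariant we cannot translate the displacement function directly, but we can look at the conjugated automorphism $\Psi_k^{-1}\Phi\Psi_k$. Concretely, $d(\Gamma_k\cdot\Psi_k,\ \Gamma_k\cdot\Psi_k\cdot(\Psi_k^{-1}\Phi\Psi_k)) = d(\Gamma_k\cdot\Psi_k,\ \Gamma_k\cdot\Phi\cdot\Psi_k) = d(\Gamma_k,\Gamma_k\cdot\Phi)\to D$ since $Out(F_n)$ acts by isometries. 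So after passing to a subsequence, the points $\Gamma_k\cdot\Psi_k$ converge in $K$ to some $\Gamma_\infty\in\X_n(\theta)$, and it remains to control the automorphisms $\Psi_k^{-1}\Phi\Psi_k$.

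The key point is that these conjugates must eventually be constant, or at least run through finitely many elements. First I would note that an optimal map $\phi_k:\Gamma_k\cdot\Psi_k\to(\Gamma_k\cdot\Psi_k)\cdot(\Psi_k^{-1}\Phi\Psi_k)$ has slope $\sigma=e^{d(\Gamma_k,\Gamma_k\cdot\Phi)}$, which is bounded above (the sequence converges to $e^D$) and below by $1$. Both the source and target live in the compact set $K$, so their edge lengths are bounded below by $\theta$ (more precisely, bounded below by a constant depending only on $K$) and the number of edges is bounded. Hence the combinatorial length of the image of each edge under $\phi_k$ — an immersed edge path in a graph from a finite list — is uniformly bounded. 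That means each conjugate $\Psi_k^{-1}\Phi\Psi_k$, viewed as a homotopy equivalence between marked graphs in $K$, is described by bounded combinatorial data: there are only finitely many marked graphs in $K$ up to the relevant equivalence, only finitely many homotopy classes of maps between two such graphs of bounded combinatorial size, so the set $\{\Psi_k^{-1}\Phi\Psi_k\}$ is finite. (This is the standard ``properness of the action on the thick part plus bounded Lipschitz constant forces finitely many translates'' argument, e.g. as in Francaviglia--Martelli or the proof that axes of hyperbolic elements in the thick part are discrete.)

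Once we know the conjugates take only finitely many values, pass to a further subsequence on which $\Psi_k^{-1}\Phi\Psi_k = \Psi$ is constant. Then $\tilde\Psi(\Gamma_k\cdot\Psi_k) = d(\Gamma_k,\Gamma_k\cdot\Phi)\to D$, and since $\Gamma_k\cdot\Psi_k\to\Gamma_\infty$ and $\tilde\Psi$ is continuous (using continuity of $d$, as remarked in the excerpt), we get $\tilde\Psi(\Gamma_\infty)=D$. But $\Psi$ is conjugate to $\Phi$, so $\inf\tilde\Psi = \inf\tilde\Phi = D$ and the infimum of $\tilde\Phi$ is realized, at $\Gamma_\infty\cdot\Psi_k^{-1}$ for any fixed $k$ in the subsequence — contradicting the standing assumption that $D$ is not realized. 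I expect the main obstacle to be making the ``bounded combinatorial data implies finitely many conjugates'' step fully rigorous: one has to be careful that bounding $\sigma(\phi_k)$ and the geometry of source/target really does bound the homotopy class of $\phi_k$ (the marking can a priori be twisted), which is exactly where cocompactness of the $Out(F_n)$-action on the thick part and the fact that markings are only defined up to homotopy get used.
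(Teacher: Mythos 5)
Your proposal follows essentially the same strategy as the paper: translate into the thick part by cocompactness, pass to a convergent subsequence $\Gamma_k\cdot\Psi_k\to\Gamma_\infty$, show the conjugates $\Psi_k^{-1}\Phi\Psi_k$ take only finitely many values, and derive a contradiction by producing a realizer of the infimum. The paper resolves the finiteness step that you flag as the main obstacle more cleanly: instead of bounding combinatorial data of maps between the varying graphs $\Gamma_k\cdot\Psi_k$, it first uses the triangle inequality to transfer the displacement estimate to the single graph $\Gamma_\infty$ (showing $d(\Gamma_\infty\cdot\Psi_k^{-1},\Gamma_\infty\cdot\Psi_k^{-1}\Phi)\to D$) and then applies Arzela--Ascoli to the set of $e^{D+1}$-Lipschitz self-maps of $\Gamma_\infty$, which is compact and hence represents only finitely many homotopy classes, neatly sidestepping the issue of the markings being defined only up to homotopy that you were worried about.
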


\begin{proof}
Suppose not, and after passing to a subsequence assume that
$\Gamma_k\in \X_n(\theta)$ for every $k$. By cocompactness, there are
$\Psi_k\in Out(F_n)$ such that $\Gamma_k\cdot\Psi_k\to\Gamma_\infty$
(after a subsequence). Therefore, we have
\begin{eqnarray*}
&d(\Gamma_\infty\cdot\Psi_k^{-1},\Gamma_\infty\cdot\Psi_k^{-1}\Phi)\\
\leq& d(\Gamma_\infty\cdot\Psi_k^{-1},\Gamma_k)+d(\Gamma_k,\Gamma_k\cdot\Phi)+d(\Gamma_k\cdot\Phi,\Gamma_\infty\cdot\Psi_k^{-1}\Phi)\\
=&d(\Gamma_\infty,\Gamma_k\cdot\Psi_k)+d(\Gamma_k,\Gamma_k\cdot\Phi)+d(\Gamma_k\cdot\Psi_k,\Gamma_\infty) 
\end{eqnarray*}

and
hence $$d(\Gamma_\infty\cdot\Psi_k^{-1},\Gamma_\infty\cdot\Psi_k^{-1}\Phi)\to
D$$ 
(since $d(\Gamma_k\Psi_k,\Gamma_\infty)\to 0$ and
$d(\Gamma_\infty,\Gamma_k\Psi_k)\to 0$). In other words,
$$d(\Gamma_\infty\cdot\Psi_k^{-1}\Phi^{-1}\Psi_k,\Gamma_\infty)\to D$$
Note that Arzela-Ascoli implies that there are only finitely many
$\Psi\in Out(F_n)$ such that $d(\Gamma_\infty\Psi,\Gamma_\infty)\leq
D+1$ (the set of $e^{D+1}$-Lipschitz maps
$\Gamma_\infty\to\Gamma_\infty$ is compact and nearby maps are
homotopic, so such maps represent only finitely many homotopy classes).
It follows that, after a subsequence,
$\Psi_k^{-1}\Phi^{-1}\Psi_k$ is a constant sequence and
$$d(\Gamma_\infty\cdot\Psi_k^{-1}\Phi^{-1}\Psi_k,\Gamma_\infty)=D$$
i.e. the displacement of $\Gamma_\infty\cdot\Psi_k^{-1}$ under $\Phi$ is $D$,
contradicting our assumption that $\inf$ is not realized.
\end{proof}

For $\epsilon>0$ the $\epsilon$-small subspace $\Gamma^\epsilon$ of
$\Gamma$ is the union of all essential (not necessarily immersed)
loops of length $\leq\epsilon$ (but note that $\Gamma^\epsilon$ may
not be a subgraph). There is $\epsilon_n>0$ such that for any
$\Gamma\in\X_n$ the subspace $\Gamma^{\epsilon_n}$ is always proper
(i.e. not equal to $\Gamma$, e.g. there is always an edge of length
$\geq 1/(3n-3)$, assuming no valence 2 vertices, so
$\epsilon_n<1/(3n-3)$ works). Moreover, there is a bound $B_n$ to the
length of any chain of proper core subgraphs.

\begin{prop}\label{10}
For large $k$ any optimal map $\phi:\Gamma_k\to\Gamma_k\cdot\Phi$ leaves a
nonempty 
proper core subgraph invariant up to homotopy (and so $\phi$ is
homotopic to a possibly non-optimal map that maps this core subgraph
to itself).
\end{prop}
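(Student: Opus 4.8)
The plan is to use the trichotomy structure together with the finiteness results already established. Since $\Phi$ is parabolic, $D=\inf\tilde\Phi$ is not realized, and we have a sequence $\Gamma_k$ with $d(\Gamma_k,\Gamma_k\cdot\Phi)\to D$. By Proposition \ref{thin}, for every $\theta>0$ only finitely many $\Gamma_k$ lie in $\X_n(\theta)$; hence, after passing to a subsequence, the $\Gamma_k$ leave every thick part, meaning the shortest loop length in $\Gamma_k$ tends to $0$. In particular, for $k$ large the $\epsilon_n$-small subspace $\Gamma_k^{\epsilon_n}$ is a nonempty proper subspace of $\Gamma_k$. The core of the argument is to show that for large $k$, the "small" part of $\Gamma_k$ is carried into itself (up to homotopy) by any optimal map $\phi$.

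First I would record the key metric inequality for optimal maps: if $\phi:\Gamma_k\to\Gamma_k\cdot\Phi$ is optimal with maximal slope $\lambda_k=\sigma(\Gamma_k,\Gamma_k\cdot\Phi)$, then for every loop $\alpha$ we have $\ell'(\phi(\alpha))\leq\lambda_k\ell(\alpha)$, and $\log\lambda_k\to D$, so $\lambda_k$ is bounded. Consequently, a loop $\alpha$ that is very short in $\Gamma_k$ has image $\phi(\alpha)$ of length at most $\lambda_k\ell(\alpha)$, which is also very short in $\Gamma_k\cdot\Phi$ — but $\Gamma_k\cdot\Phi$ has the same underlying marked graph structure shifted by $\Phi$, and more importantly the same minimal loop lengths as $\Gamma_k$ (the metric is unchanged; only the marking is precomposed with $\Phi$). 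So short loops in $\Gamma_k$ map to short loops; taking $\epsilon$ somewhat larger than the systole but still $<\epsilon_n$, the loops realizing lengths $\leq\epsilon$ in $\Gamma_k$ map under $\phi$ to loops of length $\leq\lambda_k\epsilon$, which for suitable $\epsilon$ are again among the loops defining $(\Gamma_k\cdot\Phi)^{\lambda_k\epsilon}$. This shows $\phi$ carries the small subspace (up to homotopy) into the small subspace, which under $\Phi^{-1}$ (i.e.\ identifying $\Gamma_k\cdot\Phi$ and $\Gamma_k$ as metric graphs) gives a $\phi$-invariant family of short loops.

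Next I would upgrade "invariant family of short conjugacy classes" to "invariant proper core subgraph." The subgroup of $F_n$ generated by all conjugacy classes of length $\leq\epsilon$ in $\Gamma_k$ corresponds (via the marking) to a core subgraph $\Gamma_k'\subseteq\Gamma_k$ — namely the core of the cover, or concretely the union of the immersed representatives of these short loops — and $\Gamma_k'$ is proper because $\Gamma_k^{\epsilon_n}$ is proper for $\epsilon<\epsilon_n$. Since $\phi$ maps each generating short loop to a short loop, $\phi$ maps $\Gamma_k'$ into (a graph homotopic to) the corresponding subgraph of $\Gamma_k\cdot\Phi$; translating back by $\Phi$, we get that $\Phi$ preserves the conjugacy class of the free factor system carried by $\Gamma_k'$. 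Here one must be a little careful: $\phi$ need not map $\Gamma_k'$ literally into $\Gamma_k'$, only up to homotopy, which is exactly the form of the conclusion as stated. To then homotope $\phi$ to an actual map preserving this subgraph one uses the standard fact that a homotopy equivalence restricting up to homotopy to a self-map of a $\pi_1$-injective subgraph can be homotoped to map that subgraph to itself.

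The main obstacle I anticipate is making precise the passage from "image loop is short, hence lies in the small subspace of the \emph{target}" to an honestly $\Phi^{-1}$-pulled-back invariant subgraph of a \emph{single} graph $\Gamma_k$ — one has to track the marking carefully, since $\Gamma_k\cdot\Phi$ has the same metric graph as $\Gamma_k$ but a marking twisted by $\Phi$, so the short loops of $\Gamma_k\cdot\Phi$ are the $\Phi^{-1}$-images of the short loops of $\Gamma_k$ and one needs $\phi$-invariance of the set $\{[\gamma]:\ell_{\Gamma_k}(\gamma)\le\epsilon\}$ itself. The other delicate point is choosing $\epsilon$ uniformly in $k$: one wants $\epsilon\ge\mathrm{systole}(\Gamma_k)$ yet $\lambda_k\epsilon<\epsilon_n$, which is possible only because the systole tends to $0$ along the chosen subsequence (Proposition \ref{thin}) while $\lambda_k$ stays bounded — so for large $k$ we may take, say, $\epsilon_k=\sqrt{\mathrm{systole}(\Gamma_k)}$, and the finiteness of the length $B_n$ of chains of proper core subgraphs guarantees the process stabilizes. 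Everything else is routine bookkeeping with Proposition \ref{greengraph} and the triangle inequality.
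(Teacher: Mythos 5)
There is a genuine gap in the central step. You observe (correctly) that an optimal $\phi$ sends loops of $\Gamma_k$-length $\leq\epsilon$ to loops of $(\Gamma_k\cdot\Phi)$-length $\leq\lambda_k\epsilon$, and since $\Gamma_k\cdot\Phi$ has the same underlying metric graph, this means $\phi$ carries the subspace $\Gamma_k^{\epsilon}$ into $\Gamma_k^{\lambda_k\epsilon}$. But you then slide from this to ``$\phi$ carries the small subspace into the small subspace,'' treating $\Gamma_k^{\epsilon}$ and $\Gamma_k^{\lambda_k\epsilon}$ as if they were the same object and concluding invariance of a single core subgraph. They are not the same: $\Gamma_k^{\lambda_k\epsilon}$ is in general strictly larger, and its core may be strictly larger than the core of $\Gamma_k^{\epsilon}$. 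That is precisely the obstruction the proof has to overcome, and your main argument does not overcome it; it only produces an inclusion into a bigger subgraph at each application of $\phi$.

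The fix is the pigeonhole that the paper runs: take $\theta=\epsilon_n/(e^{D+1})^{B_n}$ so that (by Proposition~\ref{thin}) eventually $\Gamma_k\notin\X_n(\theta)$, take $k$ also large enough that $\lambda_k<e^{D+1}$, and set $\delta_i=\epsilon_n/(e^{D+1})^{i}$ for $i=0,\dots,B_n$. Then $\Gamma_k^{\delta_0}\supset\cdots\supset\Gamma_k^{\delta_{B_n}}$ are $B_n+1$ nonempty proper homotopically nontrivial subspaces, so by the bound $B_n$ on chains of proper core subgraphs two consecutive ones, $\Gamma_k^{\delta_i}$ and $\Gamma_k^{\delta_{i+1}}$, have the \emph{same} core; and since $\phi(\Gamma_k^{\delta_{i+1}})\subseteq\Gamma_k^{\delta_i}$, that common core is $\phi$-invariant up to homotopy. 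You do gesture at exactly this in your last sentence (``the finiteness of the length $B_n$ of chains...guarantees the process stabilizes''), which shows the right intuition, but it is left as an unexplained remark rather than integrated as the decisive step; as written the body of your argument asserts invariance before having earned it. A smaller point: the worry about ``translating back by $\Phi^{-1}$'' is a red herring and a sign of confusion here — the small subspace is a subset of the underlying metric graph, which is literally the same for $\Gamma_k$ and $\Gamma_k\cdot\Phi$, so no $\Phi^{-1}$-pullback of the short-loop set is needed.
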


\begin{proof}
Let
$\theta=\epsilon_n/(e^{D+1})^{B_n}$. By Proposition \ref{thin} eventually
$\Gamma_k\not\in\X_n(\theta)$. Choose $k$ so large that in addition the
displacement of $\Gamma_k$ is $<D+1$. Set
$\delta_i=\epsilon_n/e^{(D+1)^i}$, $i=0,1,2,\cdots,B_n$. Then 
$$\Gamma_k^{\delta_0}\supset
\Gamma_k^{\delta_1}\supset\cdots\supset\Gamma_k^{\delta_{B_n}}$$ form
a chain of homotopically nontrivial proper subspaces (not necessarily
subgraphs, but abstract graphs) of length $B_n+1$, so there must be
some $i$ so that $\Gamma_k^{\delta_i}$ and $\Gamma_k^{\delta_{i+1}}$
have the same core. By definition, an optimal map must send
$\Gamma_k^{\delta_{i+1}}$ into $\Gamma_k^{\delta_i}$, so the common
core is mapped to itself up to homotopy.
\end{proof}

\section{Train tracks}

In this section we complete the proof of the train track theorem. We
first make the standard definitions; the definition of a train track
structure is as before, but does not require a metric on the graph. 

\begin{definition}
Let $\Gamma$ be a marked graph with marking $f:R_n\to\Gamma$. We say
that $\phi:\Gamma\to\Gamma$ represents $\Phi\in Out(F_n)$ if $\phi
f\simeq f\Phi$. The automorphism $\Phi$ is {\it reducible} if there is
some $\phi:\Gamma\to\Gamma$ that represents $\Phi$ and leaves a
homotopically nontrivial (i.e. not a forest) proper subgraph
invariant. Otherwise $\Phi$ is {\it irreducible}.
\end{definition}

\begin{definition}
A {\it train track structure} on a core graph $\Gamma$ is an
equivalence relation on the set of directions at every vertex of
$\Gamma$ with at least two equivalence classes (called gates) at every
vertex. As before, a turn $\{d,d'\}$ is {\it illegal} if $d\sim d'$,
and otherwise it is {\it legal}. An immersed loop or a path is {\it
  legal} if it takes only legal turns.
\end{definition}

\begin{definition}
Let $\Phi\in Out(F_n)$ be irreducible.  A map $\phi:\Gamma\to\Gamma$
representing $\Phi$ is a {\it train track map} if it sends each edge
to a nondegenerate immersed path and the following two equivalent
conditions are satisfied.
\begin{enumerate}[(i)]
\item
There
is a train track structure on $\Gamma$ such that $\phi$ sends edges to
legal paths and if $v$ is a vertex of $\Gamma$ then either
\begin{itemize}
\item $\phi(v)=w$ is a vertex and inequivalent directions at $v$ map to
  inequivalent directions at $w$, or
\item $\phi(v)$ is not a vertex, $v$ has two gates, and all directions in
  one gate fold to one direction at $\phi(v)$, while all directions in
  the other gate fold to the other direction at $\phi(v)$.
\end{itemize}
\item There is a loop $\alpha$ in $\Gamma$ such that every iterate
  $\phi^k(\alpha)$ is immersed, $k=1,2,3,\cdots$.
\end{enumerate}
\end{definition}

To prove that (i) implies (ii) it suffices to choose any legal loop
for $\alpha$, since (i) guarantees that legal loops map to legal
loops. For the converse, first note that the iterates of $\alpha$
cross every edge of $\Gamma$ (otherwise the union of the edges crossed
by the iterates of $\alpha$ would be a homotopically nontrivial proper
$\phi$-invariant subgraph). Thus iterated images of edges are
immersed. Define a train track structure on $\Gamma$ by declaring
$d\sim d'$ if $\phi_*^k(d)=\phi_*^k(d')$ for some $k\geq 1$. The
reader may now easily check that the conditions in (i) hold.

For example, any simplicial isomorphism $\phi:\Gamma\to\Gamma$ is a
train track map.

\begin{remark}
For a given $\phi:\Gamma\to\Gamma$ there may be more than one
invariant (i.e. satisfying (i)) train track structure. For example, in
Figure 2 we could take the gates to be $\{\overline a,\overline b\}$
and $\{a,b\}$. In the above paragraph we constructed the invariant
train track structure with the {\it minimal} collection of illegal
turns. 
\end{remark}

The discussion in Section
\ref{trichotomy} proves the following theorem.

\begin{main}\cite{BH}
Every irreducible automorphism $\Phi$ is
represented by a train track map $\phi:\Gamma\to\Gamma$.
\end{main}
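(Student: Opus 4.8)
The plan is to run the trichotomy from Section \ref{trichotomy}. Given an irreducible $\Phi$, consider the displacement function $\tilde\Phi$ and ask which of the three cases holds. In the elliptic case $\Phi$ has finite order; choosing $\Gamma$ with $\Gamma\cdot\Phi=\Gamma$ and an isometry $\phi$ representing $\Phi$, we are done immediately, since a simplicial isomorphism is a train track map (with the train track structure induced by $\phi_*$, which is a bijection on directions at each vertex). In the hyperbolic case we use Proposition \ref{tt}: after a small perturbation of $\Gamma$ realizing $\inf\tilde\Phi=\log\lambda$, we obtain an optimal map $\phi:\Gamma\to\Gamma\cdot\Phi$ with $\phi(\Delta)\subseteq\Delta$, sending edges of $\Delta$ to legal paths and legal turns to legal turns, where $\Delta$ is the tension graph. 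Since $\Phi$ is irreducible and $\Delta$ is a $\phi$-invariant (up to homotopy) core subgraph, irreducibility forces $\Delta=\Gamma$. Reinterpreting $\phi$ as a self-map $\Gamma\to\Gamma$ (identifying $\Gamma$ with $\Gamma\cdot\Phi$ via the underlying graph, so $\phi$ represents $\Phi$), the three bullets of Proposition \ref{tt} are exactly condition (i) in the definition of a train track map, with the caveat about 2-gate vertices mapping to non-vertices already accounted for in the remark following Proposition \ref{tt}. Hence $\phi$ is a train track map.

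The remaining point is to rule out the parabolic case for irreducible $\Phi$. Here I would argue by contradiction: suppose $\inf\tilde\Phi$ is not realized and fix a sequence $\Gamma_k$ with $d(\Gamma_k,\Gamma_k\cdot\Phi)\to D=\inf\tilde\Phi$. By Proposition \ref{10}, for large $k$ any optimal map $\phi:\Gamma_k\to\Gamma_k\cdot\Phi$ leaves a nonempty proper core subgraph invariant up to homotopy, and therefore (identifying $\Gamma_k$ with $\Gamma_k\cdot\Phi$ as abstract graphs, so that $\phi$ becomes a self-map representing $\Phi$) $\Phi$ is represented by a map of $\Gamma_k$ preserving a homotopically nontrivial proper subgraph. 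This contradicts irreducibility. So a parabolic $\Phi$ is reducible, and an irreducible $\Phi$ is either elliptic or hyperbolic, in both of which cases we have produced a train track representative.

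I expect the only genuine subtlety to be bookkeeping about the identification of $\Gamma$ with $\Gamma\cdot\Phi$ and verifying that under this identification an optimal map $\Gamma\to\Gamma\cdot\Phi$ really does represent $\Phi$ as a self-map — i.e., that $\phi f \simeq f\Phi$ — and that a $\phi$-invariant subgraph of $\Gamma\cdot\Phi$ pulls back to a $\phi$-invariant subgraph of $\Gamma$ in the sense of the reducibility definition. Everything substantive has already been done: Proposition \ref{tt} supplies the train track structure in the hyperbolic case, and Proposition \ref{10} supplies the reduction in the parabolic case. So the theorem follows by assembling these pieces according to which case of the trichotomy $\Phi$ falls into, using irreducibility to exclude the parabolic case and to upgrade the invariant tension graph $\Delta$ in the hyperbolic case to all of $\Gamma$.

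\begin{proof}
Apply the trichotomy of Section \ref{trichotomy} to $\Phi$.

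If $\Phi$ is elliptic, then as shown there $\Phi$ has finite order and is represented by a simplicial isomorphism $\phi:\Gamma\to\Gamma$; taking the train track structure in which $d\sim d'$ iff $\phi_*^k(d)=\phi_*^k(d')$ for some $k$ (in fact $\phi_*$ is already injective at each vertex), $\phi$ is a train track map by condition (i).

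If $\Phi$ is hyperbolic, pick $\Gamma$ realizing $\inf\tilde\Phi=\log\lambda$ and, after the perturbation of Proposition \ref{tt}, an optimal map $\phi:\Gamma\to\Gamma\cdot\Phi$ with $\phi(\Delta)\subseteq\Delta$, sending edges of $\Delta$ to legal paths and legal turns to legal turns, $\Delta$ the tension graph. Identifying $\Gamma$ with $\Gamma\cdot\Phi$ as abstract graphs, $\phi$ becomes a self-map of $\Gamma$ representing $\Phi$ which leaves $\Delta$ invariant up to homotopy. Since $\Phi$ is irreducible, $\Delta$ cannot be a homotopically nontrivial proper subgraph, and since the slope $\lambda>1$ forces $\Delta$ to be nonempty and a core graph, we must have $\Delta=\Gamma$. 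The three conclusions of Proposition \ref{tt}, together with the convention on $2$-gate vertices recorded after it, are precisely condition (i) in the definition of a train track map. Hence $\phi:\Gamma\to\Gamma$ is a train track map representing $\Phi$.

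Finally, $\Phi$ cannot be parabolic. Indeed, if $\inf\tilde\Phi$ were not realized, choose a sequence $\Gamma_k$ with $d(\Gamma_k,\Gamma_k\cdot\Phi)\to D=\inf\tilde\Phi$. By Proposition \ref{10}, for large $k$ every optimal map $\phi:\Gamma_k\to\Gamma_k\cdot\Phi$ leaves a nonempty proper core subgraph invariant up to homotopy; identifying $\Gamma_k$ with $\Gamma_k\cdot\Phi$ as above, $\Phi$ is then represented by a self-map of $\Gamma_k$ leaving a homotopically nontrivial proper subgraph invariant, contradicting irreducibility.

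Thus every irreducible $\Phi$ is elliptic or hyperbolic, and in both cases is represented by a train track map.
\end{proof}
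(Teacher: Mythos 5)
Your proof is correct and takes essentially the same route as the paper: the paper explicitly states that ``the discussion in Section \ref{trichotomy} proves the following theorem,'' and your argument is precisely that assembly --- elliptic gives a simplicial isomorphism, hyperbolic gives Proposition \ref{tt} with irreducibility forcing $\Delta=\Gamma$, and Proposition \ref{10} shows a parabolic $\Phi$ would be reducible. The one thing the paper adds afterward that you omit is the ``additional caveat'' homotopy that makes the train track map send vertices to vertices (to match the \cite{BH} convention), but this is not required by the paper's own definition of a train track map, so your proof of the stated theorem is complete.
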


There is an additional caveat. In \cite{BH} train track maps always
send vertices to vertices. This is very useful, and luckily it is not
hard to achieve. Let $\phi_t$, $t\in [0,1]$, be a homotopy of $\phi=\phi_0$
that moves each $\phi(v)$ that is not a vertex to an endpoint of the
edge containing $\phi(v)$.  We also insist that during the homotopy
the order of the images of vertices in the same edge does not change
(i.e. there are no collisions, nor ``uncollisions'') until the very
last moment when several images of vertices may arrive at the same
vertex. It is not hard to see that the images of legal loops under
$\phi_t$ are unchanged (and they are still legal loops). There may be
edges that map to points under $\phi_1$; collapse all such edges
iteratively (i.e. after a collapse there may be new such edges that
are then collapsed). We obtain a new map $\phi':\Gamma'\to\Gamma'$
representing $\Phi$.  Any legal loop in $\Gamma$ induces a loop in
$\Gamma'$ whose $\phi'$-iterates are immersed, so $\phi'$ is a train
track map that sends vertices to vertices.

Finally, as in \cite{BH}, one can put a metric on $\Gamma'$ by solving
linear equations e.g. as in Figure 2. The array of lengths is a
positive left eigenvector of the transition matrix $M$ for $\phi'$ whose
$ij$-entry is the number of times $\phi'(e_j)$ crosses $e_i$ in either
direction. The irreducibility of $\Phi$ implies the irreducibility of
$M$, so by the Perron-Frobenius theory $M$ has a unique (positive)
eigenvalue with an associated positive eigenvector.

\begin{remark}
When $\phi:\Gamma\to\Gamma$ sends vertices to vertices and edges to
nondegenerate immersed paths, finding an invariant train track
structure is algorithmic (when it exists). One forms a (finite)
directed graph whose vertices are the directions at the vertices of
$\Gamma$ and a directed edge from $d$ to $d'$ when
$d'=\phi_*(d)$. Thus each vertex of the directed graph has one
outgoing edge and following outgoing edges eventually ends in a
periodic orbit (i.e. a cycle).  Then define $d\sim d'$ if $d$ and $d'$
are based at the same vertex and their forward iterates eventually
coincide. For example, in Figure 2 we have $a\mapsto a$, $b\mapsto b$
and $\overline a\mapsto\overline b\mapsto\overline b$, so $\overline
a\sim\overline b$ is the only nontrivial equivalence.
\end{remark}

Along the same lines one can give a proof of the existence of relative
train track maps \cite{BH} representing any given $\Phi\in
Out(F_n)$. One works in a relative Outer space, where all graphs
contain a fixed subgraph $\Gamma_0$ and on which
$\phi:\Gamma_0\to\Gamma_0$ partially representing $\Phi$ has already
been constructed. The edges of $\Gamma_0$ are assigned length 0. The
strategy of the absolute case applies here as well. The details may
appear elsewhere.

\nocite{fm2}
\bibliography{./ref}

\end{document}